\documentclass{article}

\usepackage{amsmath}
\usepackage{amssymb}
\usepackage{amsthm}

\newtheorem{theorem}{Theorem}[section]
\newtheorem{lemma}[theorem]{Lemma}
\newtheorem{corollary}[theorem]{Corollary}
\newtheorem{proposition}[theorem]{Proposition}
\theoremstyle{definition}
\newtheorem{definition}[theorem]{Definition}

\newtheorem{question}[theorem]{Question}

\newcommand\defined{\mathord\downarrow}

\newcommand\pair[2]{\left\langle #1, #2 \right\rangle}
\newcommand\trunc{\mathop{\upharpoonright}}

\newcommand\leqT{\leq_{\mathrm T}} 
\newcommand\geqT{\geq_{\mathrm T}}



\newcommand\join\oplus

\newcommand\symmdiff{\mathop\triangle}
\newcommand\emptystring\emptyset

\usepackage[colorlinks]{hyperref}
\begin{document}

\title{Kolmogorov Complexity of \\ Attractive Degrees}
\author{Tiago Royer}
\maketitle

\begin{abstract}
    This paper proves a Kolmogorov-complexity-flavored sufficient condition for a set to be attractive
    (Theorem~\ref{thm:2Kn}),
    and discusses some consequences of this condition.
\end{abstract}

\section{Introduction}

The asymptotic density of a set $A \subseteq \mathbb N$
is the number
\begin{equation}
    \limsup_n \frac{|A \cap [0, n)|}{n}.
    \label{eq:asymptotic-density}
\end{equation}
We say that $A$ is \emph{sparse} if this number is zero,
and that $A$ and $B$ are \emph{coarsely similar}
if their symmetric difference $A \symmdiff B$ is sparse.

This asymptotic density can be used to define a pseudometric on $2^{\mathbb N}$
by letting $\delta(A, B)$ be the asymptotic density of the symmetric difference $A \symmdiff B$.
For a set $A$ and a Turing degree $\mathbf b$,
we use $\delta$ to define $\gamma_{\mathbf b}(A)$ as
\begin{equation*}
    \gamma_{\mathbf b}(A) = \sup_{B \in b} 1 - \delta(A, B).
\end{equation*}
This is a measure of how difficult $A$ is to compute,
from the point of view of $B$.
For example,
if $A$ is coarsely computable relative to $\mathbf b$,
then $\gamma_{\mathbf b}(A) = 1$
(``$\mathbf b$ can tell membership in $A$ asymptotically all the time''),
and if $\mathbf b$ is $1$-random relative to $A$
then $\gamma_{\mathbf b}(A) \geq 1/2$
(``$\mathbf b$  can guess at least half of the entries of $A$'').
We can ``lift'' this definition to Turing degrees
by taking infimums:
\begin{equation*}
    \Gamma_{\mathbf b}(\mathbf a) = \inf_{\substack{A \in \mathbf a \\ B \in \mathbf b}} \gamma_B(A).
\end{equation*}
(We also write $\Gamma_B(A)$ for $\gamma_{\mathbf b}(\mathbf a)$
where $A \in \mathbf a$ and $B \in \mathbf b$.)

Intuitively,
this asks $\mathbf b$ what is the hardest challenge that $\mathbf a$ can pose to it,
with respect to coarse computations.
In fact,
we can show that $\Gamma_{\mathbf b}(\mathbf a) = 1$
if and only if $\mathbf b \geq \mathbf a$.

This fact can be used to define a metric on Turing degrees,
called the Hausdorff distance $H(\mathbf a, \mathbf b)$ between $\mathbf a$ and $\mathbf b$:
\begin{equation*}
    H(\mathbf a, \mathbf b) = 1 - \min \{\Gamma_{\mathbf b}(\mathbf a), \Gamma_{\mathbf a}(\mathbf b)\}.
\end{equation*}
(The name ``Hausdorff distance'' is justified
because we can show that $H(\mathbf a, \mathbf b)$
equals the classical Hausdorff distance between the closures of $\mathbf a$ and $\mathbf b$
in the pseudometric space $(2^{\mathbb N}, \delta)$
\cite[Proposition~4.15]{metricpdf}.)

The ``$1-$'' is here because $\Gamma_{\mathbf b}(\mathbf a)$ is a ``measure of closeness'',
where $\Gamma_{\mathbf b}(\mathbf a) = 0$ if $\mathbf a$ is ``hard'' for $\mathbf b$,
and $\Gamma_{\mathbf b}(\mathbf a) = 1$ if it is ``easy''.
In fact,
$H$ is a metric on the space of Turing degrees.
(Analogously we write $H(A, B)$ for $H(\mathbf a, \mathbf b)$
where $A \in \mathbf a$ and $B \in \mathbf b$,
but for sets the function $H$ is just a pseudometric.)

In~\cite{Monin2018},
Monin showed that $\Gamma_{\emptyset}(A)$ can only attain the values $0$, $1/2$, and $1$,
and \cite[Theorem~4.20]{metricpdf} extended this result to all $\Gamma_B(A)$.
In fact,
we get the following characterization of when $\Gamma_B(A)$ attains each value.

\begin{definition}
    A function $f$ is \emph{$2^{2^n}$\!-infinitely often equal} (i.o.e.) relative to $A$
    if, for every function $g \leqT A$ such that $g(n) < 2^{2^n}$ for all $n$,
    there exists infinitely many $n$ such that $f(n) = g(n)$.
    \footnote{
        We could replace the $2^{2^n}$ with any function $F$
        to obtain the definition of $F$-infinitely often equal,
        but the only bound we will use in this paper is $F(n) = 2^{2^n}$.
    }
\end{definition}

\begin{theorem}[\cite{Monin2018,metricpdf}]
    The following are equivalent.
    \begin{itemize}
        \item $\Gamma_B(A) > 1/2$.
        \item $\Gamma_B(A) = 1$.
        \item $B \geqT A$.
    \end{itemize}
\end{theorem}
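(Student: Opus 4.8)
The plan is to prove the cycle $\Gamma_B(A) = 1 \Rightarrow \Gamma_B(A) > 1/2 \Rightarrow B \geqT A \Rightarrow \Gamma_B(A) = 1$. The first implication is trivial, so there are really two things to establish: that $B \geqT A$ forces $\Gamma_B(A) = 1$, and that $\Gamma_B(A) > 1/2$ forces $B \geqT A$. The second is the substantial one. I would handle it by exhibiting, for every set $A$, a single ``rigid'' representative $A^* \equiv_T A$ with the property that any $C$ which is even slightly more than half-correct about $A^*$ must already compute $A^*$. Since $\Gamma_B(A) \leq \gamma_B(A^*)$ (the infimum defining $\Gamma_B$ is over representatives of both degrees, and $A^* \in \mathbf a$), a value of $\Gamma_B(A)$ exceeding $1/2$ would then produce such a $C$ inside $\mathbf b$, and we would be done.

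For $B \geqT A \Rightarrow \Gamma_B(A) = 1$: fix arbitrary representatives $A' \in \mathbf a$ and $B' \in \mathbf b$, so $B' \geqT A'$. Let $S = \{2^n : n \in \mathbb N\}$, a computable set of asymptotic density $0$, and define $C$ to agree with $A'$ off $S$ and to satisfy $C(2^n) = B'(n)$. Then $A' \symmdiff C \subseteq S$ is sparse, so $\delta(A', C) = 0$; moreover $C \equiv_T B'$, because off $S$ the set $C$ is computed from $A' \leqT B'$, on $S$ it is computed from $B'$, and conversely $B'$ is read off the values of $C$ on $S$. Hence $\gamma_{B'}(A') \geq 1 - \delta(A', C) = 1$, and since this holds for all representatives, $\Gamma_B(A) = 1$.

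The heart of the argument is the following lemma: \emph{for every $A$ there is $A^* \equiv_T A$ such that $1 - \delta(A^*, C) > 1/2$ implies $A^* \leqT C$.} To build $A^*$, partition $\mathbb N$ into consecutive blocks $I_n$ whose lengths grow doubly exponentially (e.g.\ $I_0 = [0,4)$ and $I_n = [2^{2^n}, 2^{2^{n+1}})$ for $n \geq 1$), so that $|I_n| / \max(I_n) \to 1$: each block eventually swamps everything before it. Let $A^*$ be constant on each block, equal to $A(n)$ throughout $I_n$; clearly $A^* \equiv_T A$. Now suppose $1 - \delta(A^*, C) > 1/2$, i.e.\ $\delta(A^*,C) < 1/2$. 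Pick $d'$ with $\delta(A^*,C) < d' < 1/2$; by definition of $\limsup$ there is $N_0$ with $|(A^* \symmdiff C) \cap [0,N)| < d' N$ for all $N \geq N_0$. Applying this at $N = \max(I_n)$ and using $|I_n|/\max(I_n) \to 1$, one gets for all large $n$ that $C$ disagrees with $A^*$ on strictly fewer than $|I_n|/2$ of the points of $I_n$; hence the strict majority bit of $C$ on $I_n$ equals $A(n)$. So $C$ computes $A(n)$ for all sufficiently large $n$ (the finitely many exceptions being hard-coded), whence $A^* \leqT A \leqT C$, proving the lemma.

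Finally, assuming $\Gamma_B(A) > 1/2$: since $\gamma_B(A^*) \geq \Gamma_B(A) > 1/2$, the supremum defining $\gamma_B(A^*)$ yields some $C \in \mathbf b$ with $1 - \delta(A^*, C) > 1/2$; the lemma gives $A^* \leqT C$, hence $A \leqT B$, closing the cycle. I expect the only real obstacle to be the density bookkeeping in the lemma: one must choose the block lengths so that ``$\limsup$ of prefix-disagreement-density below $1/2$'' genuinely upgrades to ``disagreement on strictly fewer than half the points of each late block'', and verify that the $I_n$ form a computable partition of $\mathbb N$. (One could alternatively route this lemma through the $2^{2^n}$-infinitely-often-equal machinery of the cited characterization, but the direct coding argument above seems most transparent.)
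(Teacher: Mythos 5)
Your proof is correct. The paper itself gives no proof of this theorem (it is quoted from the cited references), so there is nothing internal to compare against; but your argument is the standard one for this result: the coding of $B'$ into a density-zero set to get $B \geqT A \Rightarrow \Gamma_B(A) = 1$, and the majority-vote lemma on blocks $I_n$ with $|I_n|/\max(I_n) \to 1$ to get $\Gamma_B(A) > 1/2 \Rightarrow B \geqT A$. The density bookkeeping you flag does go through: if $\delta(A^*,C) < d' < 1/2$, then evaluating the prefix bound at the right endpoint $b_n$ of $I_n = [a_n, b_n)$ gives at most $d' b_n$ disagreements inside $I_n$, and $d' b_n < (b_n - a_n)/2$ holds as soon as $a_n/b_n < 1 - 2d'$, which your doubly exponential blocks guarantee for all large $n$; the finitely many bad blocks are absorbed into the oracle program. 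You also correctly handle the one point that is easy to overlook, namely that the witness $C$ in the first implication must have degree exactly $\mathbf b$ (taking $C = A'$ would not do), which is why the coding of $B'$ on the powers of two is needed. This direct route is arguably more elementary than deducing the equivalence from the $2^{2^n}$-i.o.e.\ machinery of Theorem~\ref{thm:gamma-equals-0}, which is the natural alternative the paper's framework suggests.
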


\begin{theorem}[\cite{Monin2018,metricpdf}]
    The following are equivalent.
    \begin{itemize}
        \item $\Gamma_B(A) < 1/2$.
        \item $\Gamma_B(A) = 0$.
        \item $A$ computes a function $f$ which is $2^{2^n}$ infinitely often equal relative to $B$.
    \end{itemize}
    \label{thm:gamma-equals-0}
\end{theorem}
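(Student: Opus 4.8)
The equivalence of the first two items is immediate from the trichotomy quoted above: as $\Gamma_B(A)\in\{0,\tfrac12,1\}$, an inequality $\Gamma_B(A)<\tfrac12$ forces $\Gamma_B(A)=0$, and the converse is trivial. So the real task is to connect these with the third item, and by the same trichotomy it suffices to prove two implications: (a) if $A$ computes a function that is $2^{2^n}$-i.o.e.\ relative to $B$, then $\Gamma_B(A)<\tfrac12$; and (b) if $\Gamma_B(A)<\tfrac12$, then $A$ computes such a function. In both constructions I would work on a partition of $\mathbb N$ into consecutive finite blocks $I_0,I_1,\dots$, reserving in addition a density-zero set of positions on which the set being built copies $A$ (resp.\ $B$) bit for bit, so as to get $A'\equiv_{\mathrm T}A$ (resp.\ $B'\equiv_{\mathrm T}B$) rather than just a one-way reduction; this part is routine and I would not dwell on it.

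For (a): fix $f\leqT A$ that is $2^{2^n}$-i.o.e.\ relative to $B$. Since $\Gamma_B(A)=\inf_{A'\equiv_{\mathrm T}A}\gamma_{\mathbf b}(A')$, I would build a single $A'\equiv_{\mathrm T}A$ with $\gamma_{\mathbf b}(A')<\tfrac12$. On each block $I_n$ I would fix a binary code $c^n_0,\dots,c^n_{2^{2^n}-1}$ of length $\lvert I_n\rvert$ that is ``anti-covering'': for every string $\sigma$ of length $\lvert I_n\rvert$ some $c^n_j$ differs from $\sigma$ on at least $(\tfrac12+\eta)\lvert I_n\rvert$ coordinates, where $\eta>0$ is a fixed constant. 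A volume-counting argument shows such a code exists provided $\lvert I_n\rvert$ is at most a constant multiple (depending on $\eta$) of $2^n$ --- which is why the block lengths should grow like $2^n$ --- and a witnessing code can be found by finite search, so the assignment is computable. Set $A'\trunc I_n:=c^n_{f(n)}$. Given any $B'\equiv_{\mathrm T}B$, the string $B'\trunc I_n$ is $B$-computable in $n$, so the function $g$ sending $n$ to an index $j$ maximizing the disagreement between $c^n_j$ and $B'\trunc I_n$ satisfies $g\leqT B$ and $g(n)<2^{2^n}$; by i.o.e., $f(n)=g(n)$ for infinitely many $n$, and on each such ``bad'' block $A'$ and $B'$ disagree on at least $(\tfrac12+\eta)\lvert I_n\rvert$ positions. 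It then remains to check (see below) that infinitely many bad blocks force $A'\symmdiff B'$ to have upper density at least $\tfrac12+\eta'$ for a constant $\eta'>0$ independent of $B'$, whence $\gamma_{\mathbf b}(A')\le\tfrac12-\eta'$.

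For (b): choose $A'\equiv_{\mathrm T}A$ with $\gamma_{\mathbf b}(A')<\tfrac12$ (possible since $\Gamma_B(A)=0$ is an infimum). Again using blocks of length $\Theta(2^n)$ and a fixed code with $2^{2^n}$ words, I would let $f(n)$ be a ``coarse code'' of $A'\trunc I_n$ (e.g.\ the index of a nearby codeword), so that $f\leqT A'\equiv_{\mathrm T}A$ and $f(n)<2^{2^n}$. To see $f$ is $2^{2^n}$-i.o.e.\ relative to $B$: from a hypothetical $g\leqT B$, $g(n)<2^{2^n}$, that meets $f$ only finitely often, I would assemble blockwise a set $B'\equiv_{\mathrm T}B$ (from the codewords indexed by $g$) that agrees with $A'$ on at least half of every long initial segment, so $A'\symmdiff B'$ has upper density $\le\tfrac12$ and $\gamma_{\mathbf b}(A')\ge\tfrac12$ --- a contradiction.

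The heart of the matter, in both directions, is the quantitative passage between ``the two sets are a constant-times-more-than-half fraction far (or close) on infinitely many blocks'' and ``$\delta(A',B')$ is bounded away from $\tfrac12$ along infinitely many initial segments.'' This is where the bound $2^{2^n}$ is pinned down, and I expect it to be the main difficulty. A block of length $\ell$ carrying an anti-covering code encodes about $\ell$ bits of ``answer,'' so it can be driven by an i.o.e.-value below $2^{2^n}$ only when $\ell=O(2^n)$; but then $\lvert I_n\rvert$ is only comparable to $\sum_{m<n}\lvert I_m\rvert$, so a single bad block occupies just a bounded fraction of the initial segment ending it, and the naive estimate yields only $\gamma_{\mathbf b}(A')\le\tfrac34$, not $<\tfrac12$. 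Getting below $\tfrac12$ will require laying out the blocks and the way $f$ is consulted across them so that bad blocks \emph{accumulate} --- forcing long runs of consecutive bad blocks, or weighting blocks so the bad ones carry almost all the measure of suitable initial segments --- and then confirming that $2^{2^n}$ is exactly the bound for which this can be engineered; by contrast the coding-theoretic inputs and the degree bookkeeping are routine.
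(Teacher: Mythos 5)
This theorem is quoted in the paper from \cite{Monin2018,metricpdf} without proof, so there is no in-paper argument to compare against; judging your sketch on its own terms, it has the right opening moves but two genuine gaps, one of which you flag and one of which you do not. For the implication ``i.o.e.\ $\Rightarrow \Gamma_B(A)<1/2$'': the block structure, the anti-covering code with $2^{2^n}$ words on blocks of length $\Theta(2^n)$, and the $B$-computable ``maximally disagreeing index'' function are all correct and standard. But the step you defer --- passing from $(\tfrac12+\eta)$-disagreement on infinitely many \emph{individual} blocks to upper density of disagreement strictly above $\tfrac12$ on initial segments --- is not a quantitative detail to be checked later; it is the main content of the theorem, and your own computation shows the construction as described only yields $\gamma_{\mathbf b}(A')\leq 3/4$. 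The resolution is exactly where the bound $2^{2^n}$ earns its keep: since $\sum_{i<m}2^i<2^m$, a tuple of values bounded by $2^{2^{a_n}},\dots,2^{2^{a_{n+1}-1}}$ is a single value bounded by $2^{2^{a_{n+1}}}$, so one can re-block so that a single i.o.e.\ hit forces disagreement across a long run of consecutive blocks occupying a $(1-\epsilon)$ fraction of an initial segment; making the i.o.e.\ property survive this re-indexing is delicate and is the lemma you would actually need to prove.

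The converse direction as you sketch it is not merely incomplete but points the wrong way. Suppose $g\leqT B$ meets $f$ only finitely often, where $f(n)$ indexes a codeword near $A'\trunc I_n$. The set $B'$ you assemble from the codewords $c^n_{g(n)}$ has no reason to agree with $A'$ on half of each block: $g(n)\neq f(n)$ only says that $B'\trunc I_n$ is \emph{some other} codeword, which could perfectly well be (close to) the complement of $A'\trunc I_n$. Knowing one excluded value out of $2^{2^n}$ possibilities carries essentially no approximation power. The real argument must bootstrap ``$B$ can almost-everywhere avoid every $A$-computable $2^{2^n}$-bounded function'' into ``$B$ can compute a set agreeing with $A'$ on an upper density of at least $\tfrac12$,'' e.g.\ by an iterated halving/tracing argument in which avoiding a length-$2^n$ binary string guarantees agreement somewhere and the trace sizes shrink geometrically; this amplification is the hardest part of Monin's proof and is absent from your sketch. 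The degree bookkeeping and the coding-theoretic existence claims you call routine are indeed routine; everything you set aside as ``the main difficulty'' is the theorem.
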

(Note that $A$ is the one computing the function,
even though that $B$ is being used as an oracle to try to approach $A$.
Intuitively,
the set $A$ can present $f$ as the ``challenge'',
which is ``too difficult'' for $B$ to compute.)

This immediately implies that $H(\mathbf a, \mathbf b) \in \{0,1/2,1\}$.

We now define the main object of study of this paper.

\begin{definition}[{\cite[Definition 5.10]{metricpdf}}]
    A Turing degree $\mathbf d$ is called \emph{attractive}
    if there are measure-1 many degrees $\mathbf c$
    such that $H(\mathbf d, \mathbf c) = 1/2$,
    and is called \emph{dispersive}
    if there are measure-1 many degrees $\mathbf c$
    such that $H(\mathbf d, \mathbf c) = 1$.
\end{definition}

Kolmogorov's 0-1 law immediately shows that every degree is either attractive or dispersive.

\begin{proposition}[{\cite[Proposition 5.11]{metricpdf}}]
    The class of attractive degrees is closed upwards,
    and the class of dispersive degrees is closed downwards.
    \label{thm:closure-properties}
\end{proposition}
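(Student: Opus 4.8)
The plan is to prove only that the class of attractive degrees is closed upwards: by Kolmogorov's $0$--$1$ law every degree is attractive or dispersive and the two alternatives are mutually exclusive, so the dispersive degrees are exactly the complement of the attractive ones, and the complement of an upward-closed class is downward-closed. Thus the two halves of the proposition are equivalent.

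The first step is to restate attractiveness in terms of the $\Gamma$ values. Since $H(\mathbf d,\mathbf c)=1-\min\{\Gamma_{\mathbf c}(\mathbf d),\Gamma_{\mathbf d}(\mathbf c)\}$ takes only the values $0,1/2,1$, and $H(\mathbf d,\mathbf c)=0$ only for $\mathbf c=\mathbf d$ (a countable, hence null, set of reals), a degree $\mathbf d$ is attractive exactly when
\[
    \{\mathbf c: H(\mathbf d,\mathbf c)=1\}\;=\;N(\mathbf d)\cup M(\mathbf d)
\]
is null, where $N(\mathbf d)=\{\mathbf c:\Gamma_{\mathbf c}(\mathbf d)=0\}$ and $M(\mathbf d)=\{\mathbf c:\Gamma_{\mathbf d}(\mathbf c)=0\}$. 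By Theorem~\ref{thm:gamma-equals-0}, $N(\mathbf d)$ is the set of $\mathbf c$ such that $\mathbf d$ computes a $2^{2^n}$-i.o.e.\ function relative to $\mathbf c$, and $M(\mathbf d)$ is the set of $\mathbf c$ such that $\mathbf c$ computes a $2^{2^n}$-i.o.e.\ function relative to $\mathbf d$. So the goal becomes: $N(\mathbf d)\cup M(\mathbf d)$ is null whenever $\mathbf d$ lies above an attractive degree.

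The crux, and the step I expect to be the main obstacle, is a lemma with no hypothesis on the degree at all: for \emph{every} degree $\mathbf e$, the set $N(\mathbf e)$ is null. I would prove this by a Borel--Cantelli argument. Partition $\mathbb N$ into consecutive blocks $I_0,I_1,\dots$ with $|I_n|=2^n$, and for $C\in 2^{\mathbb N}$ let $g_C(n)<2^{2^n}$ be the integer coded in binary by $C\trunc I_n$; then $g_C\leqT C$. For a fixed function $f$, the events ``$g_C(n)=f(n)$'' are mutually independent (disjoint blocks) with probability at most $2^{-2^n}$, and $\sum_n 2^{-2^n}<\infty$, so for almost every $C$ the functions $g_C$ and $f$ agree only finitely often, whence $f$ is not $2^{2^n}$-i.o.e.\ relative to $\deg(C)$. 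Ranging over the countably many $f\leqT\mathbf e$ and taking a countable union of null sets, we conclude that for almost every $\mathbf c$ no function $\leqT\mathbf e$ is $2^{2^n}$-i.o.e.\ relative to $\mathbf c$; that is, $N(\mathbf e)$ is null.

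It then remains to handle $M$, which is where the attractiveness hypothesis enters, together with a monotonicity observation: if $\mathbf d'\geqT\mathbf d$ then $M(\mathbf d')\subseteq M(\mathbf d)$, because any function that is $2^{2^n}$-i.o.e.\ relative to $\mathbf d'$ is a fortiori $2^{2^n}$-i.o.e.\ relative to $\mathbf d$ (the $\mathbf d$-computable functions it must meet infinitely often are among the $\mathbf d'$-computable ones). So if $\mathbf d$ is attractive and $\mathbf d'\geqT\mathbf d$, then $M(\mathbf d)$ is null (being part of the null set $N(\mathbf d)\cup M(\mathbf d)$), hence $M(\mathbf d')\subseteq M(\mathbf d)$ is null, while $N(\mathbf d')$ is null by the lemma; thus $N(\mathbf d')\cup M(\mathbf d')$ is null and $\mathbf d'$ is attractive. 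Running the same bookkeeping in reverse (using that $\mathbf d$ dispersive is equivalent, by the lemma, to $M(\mathbf d)$ being conull) gives the downward closure of the dispersive degrees directly, if one prefers not to invoke the complementation reduction.
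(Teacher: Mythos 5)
Your proof is correct, and its skeleton matches the paper's: split the bad event $H(\mathbf d,\mathbf c)=1$ into the two cases $\Gamma_{\mathbf c}(\mathbf d)=0$ and $\Gamma_{\mathbf d}(\mathbf c)=0$, show the first is null for \emph{every} degree with no hypothesis, push the second up the Turing order by monotonicity of $\Gamma$ in the oracle, and get the dispersive half by complementation. Where you genuinely differ is in the ``automatic'' half. The paper gets it from relativized randomness: measure-one many $B$ are $1$-random relative to $A$, such $B$ satisfy $\delta(B,\hat A)=1/2$ for every $A$-computable $\hat A$, hence $\Gamma_B(A)\geq 1/2$, and discarding the null set of $B\geq_T A$ gives $\Gamma_B(A)=1/2$ exactly. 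You instead route through Theorem~\ref{thm:gamma-equals-0} and kill the i.o.e.\ characterization directly with a first Borel--Cantelli estimate on blocks of length $2^n$ (note you only need the convergence half, so independence is not even required). Your version is more elementary and self-contained at that step, but leans on the nontrivial equivalence of Theorem~\ref{thm:gamma-equals-0}, whereas the paper's trick works with the densities themselves and delivers the slightly stronger fact that $\Gamma_B(A)=1/2$ for measure-one many $B$, which is reused verbatim in the proof of Theorem~\ref{thm:2Kn}. Your monotonicity observation $M(\mathbf d')\subseteq M(\mathbf d)$, phrased via i.o.e.\ functions, is the same content as the paper's $\Gamma_C(B)\geq\Gamma_A(B)$ for $C\geq_T A$. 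Both routes are sound.
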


\begin{proof}
    A useful trick is to note that,
    for any noncomputable set $A$,
    there are measure-1 many sets $B$ which are $1$-random relative to $A$,
    and which do not compute $A$.
    For any such set $B$,
    being random implies that,
    for every $A$-computable set $\hat A$,
    we have $\delta(B, \hat A) = 1/2$,
    thus $\Gamma_B(A) \geq 1/2$;
    and $B \not\geq_T A$ implies $\Gamma_B(A) < 1$,
    so for every set $A$ there are measure-1 many sets $B$ for which $\Gamma_B(A) = 1/2$.

    Hence,
    to show that $A$ is attractive,
    it suffices to show that there are measure-1 many sets $B$ for which $\Gamma_A(B) = 1/2$;
    conversely,
    to show that $A$ is dispersive,
    we must show that there are measure-1 many sets $B$ for which $\Gamma_A(B) = 0$.

    So,
    let $A$ be attractive,
    and $C \geq_T A$.
    For any $B$ such that $\Gamma_A(B) = 1/2$
    we then also have $\Gamma_C(B) \geq 1/2$.
    There exists measure-1 many such $B$.
    Discarding the measure-0 many such $B$ that might be computable by $C$,
    we get measure-1 many sets $B$ for which $\Gamma_C(B) = 1/2$,
    which (as the above observation shows)
    implies that $C$ is also attractive.

    That dispersive sets are closed downwards
    follows by the fact that the two classes are complementary.
\end{proof}

\section{Attractive Degrees}

We can use Theorem~\ref{thm:gamma-equals-0}
to prove the following sufficient condition for a degree to be attractive.

\begin{theorem}
    Let $A$ be a set such that,
    for some constant $C$ and all large enough $n$,
    we have $K(A \trunc 2^n) > 2K(n) - C$.
    Then $A$ is attractive.
    \label{thm:2Kn}
\end{theorem}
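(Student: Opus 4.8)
The plan is to run the dichotomy from the proof of Proposition~\ref{thm:closure-properties}. Were $A$ computable we would have $K(A \trunc 2^n) \leq K(n) + O(1)$, contradicting the hypothesis once $K(n) > C$; so $A$ is noncomputable, and it suffices to find measure-$1$ many $B$ with $\Gamma_A(B) = 1/2$. Since $\Gamma_A(B) = 1$ holds only for the (countable, hence null) set of $B$ with $A \geqT B$, and $\Gamma_A$ takes no values besides $0, 1/2, 1$, this amounts to showing that $\{B : \Gamma_A(B) = 0\}$ is null. By Theorem~\ref{thm:gamma-equals-0} that set consists exactly of the $B$ that compute some function which is $2^{2^n}$-i.o.e.\ relative to $A$. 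I will defeat all of them with a single $A$-computable ``challenge'': let $h(n) = A \trunc 2^n$, read as the natural number with that $2^n$-bit binary expansion, so that $h \leqT A$ and $h(n) < 2^{2^n}$. It then suffices to prove that for measure-$1$ many $B$, each $\Phi_e^B$ agrees with $h$ at only finitely many arguments; for such a $B$ every total $f \leqT B$ fails to be $2^{2^n}$-i.o.e.\ relative to $A$, with $h$ being the $A$-computable function witnessing this failure, so $\Gamma_A(B) \neq 0$.

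Fix $e$. For each $n$ the map $y \mapsto \mu(\{B : \Phi_e^B(n)\defined = y\})$ is a lower-semicomputable discrete semimeasure, uniformly in $e$ and $n$, its total mass being $\mu(\{B : \Phi_e^B(n)\defined\}) \leq 1$. Averaging these against the universal lower-semicomputable semimeasure $\mathbf m$ on $\mathbb N$ --- that is, forming $Q_e(y) = \sum_n \mathbf m(n)\,\mu(\{B : \Phi_e^B(n)\defined = y\})$, which is again of total mass $\leq 1$ and lower-semicomputable uniformly in $e$ --- the coding theorem supplies a constant, independent of $e$, with $K(y \mid e) \leq -\log Q_e(y) + O(1)$. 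Since $Q_e(y) \geq \mathbf m(n)\,\mu(\{B : \Phi_e^B(n)\defined = y\})$ for every $n$, and $-\log \mathbf m(n) = K(n) + O(1)$, this yields
\[
    K(y \mid e) \;\leq\; K(n) - \log \mu\bigl(\{B : \Phi_e^B(n)\defined = y\}\bigr) + O(1)
\]
for all $n$ and $y$. Specialising to $y = h(n) = A \trunc 2^n$, combining with $K(h(n) \mid e) \geq K(h(n)) - K(e) - O(1)$, and invoking the hypothesis $K(A \trunc 2^n) > 2K(n) - C$, I obtain, for all large $n$,
\[
    \mu\bigl(\{B : \Phi_e^B(n)\defined = h(n)\}\bigr) \;<\; 2^{\,K(e) + C + O(1)}\,2^{-K(n)} .
\]

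Because $\sum_n 2^{-K(n)} < \infty$, the right-hand side is summable in $n$, so the first Borel--Cantelli lemma shows that $\{B : \Phi_e^B(n) = h(n) \text{ for infinitely many } n\}$ is null. Taking the union over all $e$ still gives a null set, and every $B$ outside it satisfies $\Gamma_A(B) \neq 0$ by the first paragraph; hence $\{B : \Gamma_A(B) = 0\}$ is null, and Proposition~\ref{thm:closure-properties} shows $A$ is attractive.

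The crux I anticipate is the probability estimate of the middle paragraph. The naive route --- at most $2^k$ output values can carry probability $\geq 2^{-k}$, so $h(n)$ is specified by $k$ bits together with a name for the threshold $k$ --- loses an extra $O(\log k)$, and since $k$ can be of size roughly $2^n$ this loss would drown out the $2^{-K(n)}$ we are extracting. Routing the bound through a lower-semicomputable semimeasure and the coding theorem is exactly what removes it; the remaining checks (uniformity of the coding-theorem constant in $e$, and that $h(n) < 2^{2^n}$) are routine.
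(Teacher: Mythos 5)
Your proof is correct and follows essentially the same route as the paper's: the central inequality $K(A \trunc 2^n) \le K(n) - \log \mu(\{B : \Phi_e^B(n)\defined = h(n)\}) + O(1)$ is exactly what the paper extracts from its explicit KC-set construction (your appeal to the coding theorem for the mixture $\sum_n \mathbf m(n)\,\mu_n$ is that construction packaged as a black box), and your Borel--Cantelli step is the paper's contradiction with the positive measure of $\mathcal I$ rephrased. The only cosmetic difference is that you track the dependence on $e$ explicitly via the $2^{K(e)}$ factor rather than fixing $e$ and absorbing it into the constants.
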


That is,
$K(A \trunc 2^n)$ may dip below $2K(n)$,
but not by much.
Observe that this is implied by the stronger condition
``$K(A \trunc n) > 2K(n) - C$ for all large enough $n$'',
because $K(2^n) = K(n) + O(1)$.

\begin{proof}
    As noted in the proof of Proposition~\ref{thm:closure-properties},
    it suffices to show that there are measure-1 many sets $B$ such that $\Gamma_A(B) = 1/2$.
    For $B$ to fail to satisfy $\Gamma_A(B) = 1/2$,
    unless it is one of the measure-0 many sets below $A$
    (in which case we'd have $\Gamma_A(B) = 1$),
    it must be the case that $\Gamma_A(B) = 0$;
    i.e.,
    the set $B$ has $2^{2^n}$-infinitely often equal degree relative to $A$.
    This means that there exists a single function $\Phi_e^B$
    such that,
    for every $A$-computable function $f$ bounded by $2^{2^n}$,
    there exists infinitely many $n$ with $f(n) = \Phi_e^B(n)$.

    Fix a Turing functional $\Phi_e$,
    and let $\mathcal I$ be the class of sets $B$
    such that $\Phi_e^B$ is an i.o.e.\ function as above.
    We will show that the measure of $\mathcal I$ is zero.
    For the sake of contradiction,
    assume that this is not the case.
    Let $f(n)$ be the first $2^n$ bits of $A$,
    interpreted as the binary expansion of a number.
    We will find concise descriptions of $f(n)$ (hence of $A \trunc 2^n)$)
    as follows.

    For each $n$ and $m$,
    let $\mu(n,m)$ be the measure of all the $B$ such that $\Phi_e^B(n) \defined = m$,
    and let $\kappa(n,m)$ be the smallest $k$ such that $2^{-k} < \mu(n,m)$.
    We would like to construct a sequence of KC requests of the form $(K(n) + \kappa(n,m), m)$,
    but the numbers $K(n) + \kappa(n,m)$ are not computable.
    So we approximate them from above as follows.

    For each $n$ and each $m < 2^{2^n}$,
    simultaneously compute $\Phi_e^\sigma(n)$ for all $\sigma$,
    and search for (Kolmogorov) descriptions $\tau$ of $n$.
    Take note of the measure of all the $\sigma$ for which $\Phi_e^\sigma(n) \defined = m$.
    If this measure surpasses $2^{-k}$ for some integer $k$,
    issue the request $(k + |\tau|, m)$,
    where $\tau$ is the shortest description of $n$ found so far.
    Similarly,
    if a shorter description $\tau$ is found for $n$,
    issue the request $(k + |\tau|, m)$
    for the same $k$
    (i.e. $k$ is the smallest integer such that
    $2^{-k}$ is a lower bound to the measure of all $\sigma$ for which $\Phi_e^\sigma(n)$
    has been observed to converge to $m$).

    First,
    let us show that this is indeed a KC set.
    The process of searching for better Kolmogorov descriptions of $n$
    means that,
    eventually,
    we will issue the request $(k + K(n), m)$.
    All other similar requests $(k + |\tau|, m)$ will have exponentially smaller weight;
    for a fixed $k,m,n$,
    the sum of the weights $2^{-k-|\tau|}$ for all found descriptions $\tau$ of $n$
    is thus bounded by $2^{-k - K(n) + 1}$.
    (Note that this bound is true even if $k$ changes before we find an optimal $\tau$,
    as $\tau$ is never shorter than $K(n)$.)
    Similarly,
    for each $n, m$,
    as we observe progressively more $\sigma$ with $\Phi_e^\sigma(n)$ converging to $m$,
    we get a progressively better approximation to $\mu(n,m)$,
    so eventually the request $(\kappa(n,m) + K(n), m)$ will be issued.
    By a similar analysis,
    the sum of the weights $2^{-k - K(n) + 1}$ of requests issued for a fixed $m, n$
    is bounded by $2^{-\kappa(n,m) - K(n) + 2}$.
    For a fixed $n$,
    the numbers $\mu(n, m)$ measure disjoint sets,
    so their sum is at most $1$.
    Because $2^{-\kappa(n,m)} < \mu(n,m)$,
    the sum of the $2^{-\kappa(n,m)}$ is also at most $1$.
    Finally,
    the fact that $\sum_n 2^{-K(n)} < 1$ shows
    that the collection of requests above is indeed a KC set.
    Summarizing,
    we have
    \begin{align*}
        \sum_{m,n,k,\tau : \text{request $(k+|\tau|, m)$}} 2^{-k-|\tau|}
            &\leq \sum_{m,n,k : \text{request $(k+K(n), m)$}} 2^{-k-K(n)+1} \\
            &\leq \sum_{m,n} 2^{-\kappa(n,m) - K(n) + 2} \\
            &= 4 \sum_n 2^{-K(n)} \sum_m 2^{-\kappa(n,m)} \\
            &\leq 4 \sum_n 2^{-K(n)} \sum_m \mu(n,m) \\
            &\leq 4 \sum_n 2^{-K(n)} \\
            &\leq 4.
    \end{align*}

    As an immediate consequence,
    we have $K(m) \leq \kappa(n,m) + K(n) + O(1)$;
    in particular,
    we have $K(f(n)) \leq \kappa(n, f(n)) + K(n) + O(1)$,
    so (because $f(n)$ is $A \trunc 2^n$ interpreted as a number)
    there exists a fixed constant $C_1$ such that
    $K(A \trunc 2^n) \leq \kappa(n, f(n)) + K(n) + C_1$.
    We now argue that there are infinitely many $n$
    such that $\kappa(n, f(n)) \leq K(n) - C - C_1$.

    If this is not the case,
    then for large enough $n$,
    say $n > N$,
    we have $\kappa(n, f(n)) > K(n) - C - C_1$.
    In particular,
    \begin{equation*}
        \sum_{n > N} \mu(n, f(n)) \leq 2 \sum_{n > N} 2^{-\kappa(n, f(n))}
            \leq 2 \sum_{n > N} 2^{-K(n) + C + C_1}.
    \end{equation*}
    Because $\sum_n 2^{-K(n)} \leq 1$,
    by making $N$ large enough
    we can make the rightmost term of this inequality as small as we want;
    in particular,
    we can make it smaller than the measure of $\mathcal I$
    (the class of sets $B$ such that $\Phi_e^B$ is an i.o.e.\ function).
    However,
    the leftmost term of this inequality,
    $\sum_{n > N} \mu(n, f(n))$,
    is (an upper bound on) the measure of the collection of all $B$
    such that $\Phi_e^B(n) = f(n)$ for some $n > N$.
    This contains all sets of $\mathcal I$,
    which is a contradiction.

    Hence,
    there are infinitely many $n$ such that $\kappa(n, f(n)) \leq K(n) - C - C_1$,
    which means $K(A \trunc 2^n) \leq 2K(n) - C$.
    Finally,
    this contradicts our initial hypothesis
    that $K(A \trunc 2^n) > 2 K(n) - C$ for all large enough $n$,
    which means that $A$ is attractive.
\end{proof}

The bound $2 K(n) - C$ is pretty much optimal for this proof strategy.
The KC requests are (ultimately) of the form $K(n) + \kappa(n, m)$,
and we need the facts that $\sum_n 2^{-K(n)}$ and $\sum_m 2^{-\kappa(n,m)}$ are both finite.

A priori,
we could replace the $K(n)$ with any other function $F$ such that $\sum_n 2^{-F(n)} < \infty$,
but, in order to actually construct the KC set,
we need to be able to approximate $F(n)$ from above.
In other words,
the set $\{(n, k) : F(n) \leq k\}$ is a c.e.\ set,
and we can show that any such $F$ beats $K$ by at most an additive constant
(see e.g.\ \cite[Theorem~3.7.8]{DowneyHirschfeldt}).

We do have a bit of lee-way in the second term, $\kappa(n,m)$.
We bound $\kappa(n, f(n))$ by $F(n) = K(n) - C - C_1$,
and again we need the property that $\sum_n 2^{-F(n)} < \infty$.
But this part of the argument does not need to be computable,
so there are no further restrictions on $F$.
That is,
the bound $2K(n) - C$ could be replaced by $K(n) + F(n) - C$
where $F$ is any function satisfying $\sum_n 2^{-F(n)}$.
We can still rephrase this in terms of Kolmogorov complexity
by using $F$ as an oracle:
if we simply let $X$ encode the set $\{\pair{n}{F(n)}: n \in \mathbb N\}$,
then $K^X(n) \leq F(n) + O(1)$,
so the bound on $K(A \trunc n)$ becomes $K(n) + K^X(n) - C$.
We note this improvement in a corollary.

\begin{corollary}[of the proof]
    For any sets $X$ and $A$ and any constant $C$,
    if $K(A \trunc 2^n) \geq K(n) + K^X(n) - C$ for all large enough $n$,
    then $A$ is attractive.
    \qed
\end{corollary}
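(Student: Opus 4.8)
The plan is to re-run the proof of Theorem~\ref{thm:2Kn}, observing that the bound $2K(n)$ enters in exactly one place — the final measure estimate — where it may be replaced by $K(n) + F(n)$ for any $F$ with $\sum_n 2^{-F(n)} < \infty$; the corollary is then the special case $F(n) = K^X(n)$, for which $\sum_n 2^{-K^X(n)} \le 1$ by the relativized Kraft inequality. Precisely, as in that proof it suffices to show that for each Turing functional $\Phi_e$ the class $\mathcal I$ of all sets $B$ such that $\Phi_e^B$ is $2^{2^n}$-i.o.e.\ relative to $A$ is null: the union over $e$ of these classes then has measure zero, so after also discarding the countably many $B$ with $B \leqT A$ we are left with measure-$1$ many $B$ satisfying $\Gamma_A(B) = 1/2$, whence $A$ is attractive. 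So I would fix $\Phi_e$ and assume toward a contradiction that $\mathcal I$ is not null.

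First I would import the first half of the original argument verbatim. It builds the same KC set — searching for plain descriptions $\tau$ of $n$ while simulating $\Phi_e^\sigma(n)$ for all $\sigma$, and issuing the request $(k + |\tau|, m)$ once the measure of $\{\sigma : \Phi_e^\sigma(n) = m\}$ is observed to exceed $2^{-k}$ — and concludes that, with $f(n)$ the first $2^n$ bits of $A$ read as a number and $\kappa(n,m)$ the least $k$ with $2^{-k} < \mu(n,m)$, we have $K(A \trunc 2^n) \le \kappa(n, f(n)) + K(n) + C_1$ for a fixed constant $C_1$. Nothing in this half refers to the right-hand side of the hypothesis, and the constructed set is still c.e., so the step carries over unchanged; note also that $C_1$ depends only on $\Phi_e$ and the universal machine, not on $X$ or $C$.

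Next I would substitute the new hypothesis: for all large $n$, $K(n) + K^X(n) - C \le K(A \trunc 2^n) \le \kappa(n, f(n)) + K(n) + C_1$, hence $\kappa(n, f(n)) \ge K^X(n) - C - C_1$. Therefore, for all large $N$,
\[
    \sum_{n > N} \mu(n, f(n)) \;\le\; 2 \sum_{n > N} 2^{-\kappa(n, f(n))} \;\le\; 2^{\,C + C_1 + 1} \sum_{n > N} 2^{-K^X(n)},
\]
and the only fact about $K^X$ needed is $\sum_n 2^{-K^X(n)} \le 1$, which makes the right-hand tail tend to $0$; I would pick $N$ making the right side smaller than the measure of $\mathcal I$. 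Exactly as in the original proof, the left side is an upper bound on the measure of $\{B : \Phi_e^B(n) = f(n) \text{ for some } n > N\}$, a set that contains $\mathcal I$ because $f \leqT A$, $f(n) < 2^{2^n}$, and every $B \in \mathcal I$ has $\Phi_e^B$ agreeing with $f$ infinitely often — the desired contradiction.

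I do not expect a real obstacle: the substance of the corollary, as the discussion preceding it makes clear, is simply that $K^X(n)$ surfaces only in the non-effective measure estimate and never in the KC-set construction, so relativizing that one term costs nothing. The points requiring a little care are that $C_1$ must be exactly the constant furnished by the proof of Theorem~\ref{thm:2Kn} (so that everything folds into a single additive constant), and that $A$ is automatically noncomputable under the hypothesis — a computable $A$ would give $K(A \trunc 2^n) = K(n) + O(1)$ and hence $K^X(n) = O(1)$, which is impossible — so that the set $\{B : B \leqT A\}$ discarded above really is null.
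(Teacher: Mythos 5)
Your proposal is correct and is essentially the paper's own argument: the corollary is proved there by exactly the observation you make, namely that the second occurrence of $K(n)$ enters only in the non-effective tail estimate on $\sum_{n>N} \mu(n,f(n))$ and so may be replaced by any $F$ with $\sum_n 2^{-F(n)} < \infty$, in particular by $K^X$. Your added remarks (that $C_1$ is independent of $X$ and $C$, and that the hypothesis forces $A$ to be noncomputable so the reduction to showing $\Gamma_A(B)=1/2$ for measure-$1$ many $B$ applies) are correct points of care that the paper leaves implicit.
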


The rest of this section discusses some interesting consequences of this theorem.

\subsection{Genericity and Randomness}

It is known that every weakly $2$-generic set is dispersive~\cite[Theorem~5.16]{metricpdf},
and that every $1$-random set is attractive~\cite[theorem~5.5]{metricpdf}.

$1$-random sets are characterized by having $K(A \trunc n) \geq n - O(1)$;
i.e.\ all sets with high Kolmogorov complexity are attractive.
Theorem~\ref{thm:2Kn} significantly lowers what ``high'' means in this case,
thus narrowing the gap between genericity and randomness.

Interestingly,
it is still open whether every $1$-generic set is dispersive.
$1$-generic sets can only compute infinitely often $K$-trivial sets
(i.e.\ sets $B$ such that $K(B \trunc n) = K(n) + O(1)$ for infinitely many $n$),
which violate the $2K(n)$ bound from the theorem.

\subsection{Hausdorff Dimension of the class of dispersive sets}

It is known that the class of dispersive sets has measure 0~\cite[Corollary~5.8]{metricpdf}.
Using the point-to-set principle~\cite[Theorem~1]{LL2018},
we can show the following stronger result.

\begin{proposition}
    Let $\mathfrak D$ be the class of dispersive sets.
    Then the (classical) Hausdorff dimension of $\mathfrak D$ is zero.
\end{proposition}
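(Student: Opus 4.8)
The plan is to use the point-to-set principle, which states that the (classical) Hausdorff dimension of a set $\mathfrak X \subseteq 2^{\mathbb N}$ equals $\min_{X} \sup_{A \in \mathfrak X} \dim^X(A)$, where $\dim^X(A) = \liminf_n K^X(A \trunc n)/n$ is the effective (relativized) dimension of $A$. So it suffices to exhibit a single oracle $X$ such that every dispersive set $A$ has $\dim^X(A) = 0$, i.e.\ $\liminf_n K^X(A \trunc n)/n = 0$.

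The key observation is the contrapositive of Theorem~\ref{thm:2Kn} (or rather its relativized form): if $A$ is dispersive, then for every constant $C$ there are infinitely many $n$ with $K(A \trunc 2^n) \leq 2K(n) - C$; in particular $\liminf_n K(A \trunc 2^n)/\! \log(2^n) \leq \liminf_n 2K(n)/n = 0$, since $K(n) \leq 2\log n + O(1)$. Thus \emph{unrelativized} effective dimension is already zero for every dispersive set. But the point-to-set principle needs a uniform oracle, and the unrelativized statement gives us exactly that with $X = \emptyset$: every dispersive $A$ satisfies $\dim(A) = \liminf_n K(A \trunc n)/n = 0$, because along the subsequence $n_k = 2^k$ we have $K(A \trunc n_k)/n_k \leq (2K(k) - C)/2^k \to 0$. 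Therefore $\sup_{A \in \mathfrak D} \dim(A) = 0$, and the point-to-set principle immediately yields $\dim_H(\mathfrak D) = 0$.

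The one point that needs care is checking that Theorem~\ref{thm:2Kn} really does give the contrapositive in the form needed: the theorem says that $K(A \trunc 2^n) > 2K(n) - C$ for all large $n$ implies $A$ is attractive, so if $A$ is dispersive (hence not attractive) then for \emph{every} $C$ the hypothesis fails, meaning for every $C$ there are infinitely many $n$ with $K(A \trunc 2^n) \leq 2K(n) - C$. Fixing any single $C$ (say $C = 0$) already gives infinitely many $n$ with $K(A \trunc 2^n) \leq 2K(n)$, and since $2K(n) = O(\log n)$ while the string length is $2^n$, the ratio tends to $0$ along this subsequence, so $\dim(A) = 0$. I do not expect a genuine obstacle here; the main thing is to cite the point-to-set principle correctly (it is Theorem~1 of \cite{LL2018}, already referenced) and to note that we need only the trivial oracle, so no relativized version of Theorem~\ref{thm:2Kn} is required.
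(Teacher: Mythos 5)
Your proof is correct and follows essentially the same route as the paper: apply the point-to-set principle with the trivial oracle and use the contrapositive of Theorem~\ref{thm:2Kn} to conclude that every dispersive set has effective dimension zero. Your write-up is in fact slightly more careful than the paper's, since you explicitly verify that the bound along the subsequence $2^n$ suffices to force $\liminf_n K(A \trunc n)/n = 0$.
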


\begin{proof}
    The point-to-set principle states that
    \begin{equation*}
        \dim_H(\mathfrak D) = \min_{A \subseteq N} \sup_{B \in \mathfrak D} \dim^A(B).
    \end{equation*}

    $\dim_H$ is the classical Hausdorff dimension,
    and $\dim^A$ is the (relativized) effective Hausdorff dimension.

    By the contrapositive of Theorem~\ref{thm:2Kn},
    for every dispersive set $B$
    there are infinitely many $n$ such that $K(B \trunc n) \leq 2K(n)$.
    This means that
    \begin{equation*}
        \dim(B) = \liminf_n \frac{K(B \trunc n)}{n} = 0.
    \end{equation*}

    So picking $A = \emptyset$ in the point-to-set principle,
    we get
    \begin{align*}
        \dim_H(\mathfrak D) &\leq \sup_{B \in \mathfrak D} \dim^A(B) \\
                            &\leq \sup_{B \in \mathfrak D} \dim(B) \\
                            &= 0.
    \end{align*}
    Hence $\mathfrak D$ has Hausdorff dimension 0.
\end{proof}

\subsection{Minimal Degrees}

Because the class of dispersive degrees is closed downwards,
using the following lemma we can show that there exists minimal dispersive degrees.

\begin{lemma}[{\cite[Theorem 5.15]{metricpdf}}]
    Every low c.e. set is dispersive.
\end{lemma}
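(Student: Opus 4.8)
The plan is to reduce, using the equivalences already recorded, to a single measure-theoretic construction and then to carry out the combinatorics behind it. By the argument in the proof of Proposition~\ref{thm:closure-properties}, to show that a low c.e.\ set $A$ is dispersive it is enough to produce measure-1 many sets $B$ with $\Gamma_A(B) = 0$; and by Theorem~\ref{thm:gamma-equals-0} this is the same as exhibiting, for measure-1 many $B$, a function $f \leqT B$ that is $2^{2^n}$-infinitely often equal relative to $A$. So the target is a uniform construction $B \mapsto f_B$ with $f_B \leqT B$, together with a proof that for almost every $B$ the function $f_B$ agrees infinitely often with every total $A$-computable function bounded by $2^{2^n}$.

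For the construction I would first fix a computable enumeration $(A_s)$ of $A$; this is the only place the hypothesis ``c.e.'' is used, and what it buys is that each $A$-computable $g = \Phi_e^A$ carries the canonical computable approximation $g_s(n) = \Phi_{e,s}^{A_s}(n)$, which converges pointwise to $g$ whenever $g$ is total, so $f_B$ may freely compute these approximations. I would then partition $\mathbb N$ into blocks, dedicating one family of blocks to each index $e$; on a block of the $e$-family I would read a ``stage guess'' $s$ off the corresponding bits of $B$ and set $f_B$ equal to $g_{e,s}$ there, defaulting to $0$ if that computation diverges or overshoots $2^{2^n}$. Since disjoint blocks of $B$ are probabilistically independent, the second Borel--Cantelli lemma reduces the whole statement to the following: for each $e$ with $\Phi_e^A$ total and bounded by $2^{2^n}$, there should be infinitely many blocks of the $e$-family on which the stage guess has non-negligible probability of landing past the modulus of convergence $m_e$ of $g_{e,\cdot}$.

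This last reduction is exactly where lowness has to be used, and it is the step I expect to be the real obstacle. A random $B$ carries no information about $A$, so the only design freedom is the (computable, though $A$-dependent) schedule of block lengths, and since a noncomputable c.e.\ set always computes functions that escape every computable bound, no crude schedule can dominate all the moduli $m_e$. The sub-lemma to aim for is thus of the shape: \emph{for a low c.e.\ set $A$ there is a computable schedule of block lengths such that every total $A$-computable $g$ bounded by $2^{2^n}$ has its modulus below the allotted budget on infinitely many blocks of its family}. I would try to extract this from lowness in the form $A' \leqT \emptyset'$ --- which lets $\emptyset'$, and hence an effective approximation, track the moduli --- together with the fact that a low set is not high, so that the relevant moduli, although not computably dominated, do fail to eventually dominate a well-chosen computable order; a priority-style bookkeeping, perhaps after first replacing $(A_s)$ by a more carefully chosen enumeration of $A$, should be what closes the gap. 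I would also record that both hypotheses are genuinely needed: without ``c.e.'' the computable approximations $g_s$ are unavailable, and without ``low'' the statement simply fails, since $\mathbf 0'$ is a c.e.\ degree that is attractive, as it contains the $1$-random set $\Omega$.
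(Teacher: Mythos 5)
The paper does not actually prove this lemma---it is imported wholesale from \cite[Theorem~5.15]{metricpdf}---so there is no internal argument to compare against; judged on its own terms, your proposal is not yet a proof, and moreover the gap is not where you locate it. The reduction is fine: by Proposition~\ref{thm:closure-properties} and Theorem~\ref{thm:gamma-equals-0} it does suffice to give measure-one many $B$ a $B$-computable function that is $2^{2^n}$-i.o.e.\ relative to $A$, and you are candid that the sub-lemma carrying all the weight is left open. The trouble is that this sub-lemma is false as stated, and its failure has nothing to do with lowness: it already occurs for the \emph{computable} challenge functions $g$, which are among the $g \leqT A$ that $f_B$ must catch. The modulus of the approximation $g_{e,s}(n) = \Phi_{e,s}^{A_s}(n)$ is at least the running time of $\Phi_e$ on input $n$; and for any fixed computable schedule of block lengths, a standard time-hierarchy diagonalization yields a total computable $g$ with values in $\{1,2\}$ such that \emph{every} index $e$ for $g$ has running time exceeding the entire budget $2^{L_n}$ allotted to position $n$, for almost all $n$. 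Against such a $g$ your construction outputs the default value on cofinitely many blocks of every family dedicated to an index of $g$, so the per-block agreement probability is $0$ rather than bounded below, and Borel--Cantelli~II never gets started. No priority bookkeeping or re-choice of the enumeration of $A$ can repair this, because the obstruction is oracle-free.

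The moral is that ``read a stage guess off $B$ and hope it clears the modulus'' is the wrong mechanism: a single computable schedule cannot even track the convergence times of the total computable functions, let alone the $A$-computable ones. The arguments in this area (Monin's work behind Theorem~\ref{thm:gamma-equals-0}) exploit the particular bound $2^{2^n}$ structurally---for instance by viewing a value $g(n) < 2^{2^n}$ as a length-$2^n$ block of a real and trading the size of the value space against the number of independent attempts---and any correct proof of the lemma needs machinery of that kind together with a genuine use of lowness. Your closing sanity checks are only half right: $\mathbf 0'$ containing $\Omega$ does show that ``c.e.'' alone is insufficient, but ``low'' alone is insufficient too, since the low basis theorem produces low $1$-random (hence attractive) sets; so c.e.-ness must be doing more work than merely furnishing the approximations $g_s$.
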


\begin{proposition}
    There exists a dispersive set $A$ which is minimal for Turing degrees.
\end{proposition}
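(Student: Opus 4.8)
The plan is to combine the standard construction of a minimal Turing degree (via perfect-tree forcing) with the lemma that every low c.e.\ set is dispersive. Since the class of dispersive degrees is closed downwards (Proposition~\ref{thm:closure-properties}, via complementarity with the attractive degrees being closed upwards), it suffices to exhibit \emph{any} dispersive set that sits above a minimal degree, or better, to build the minimal degree directly below a suitable dispersive set. The cleanest route is: first invoke the lemma to fix a \emph{low} (hence $\Delta^0_2$, in fact c.e.) dispersive set --- for instance, any low simple set, or more conveniently note that the halting problem relativized appropriately is overkill; what we want is a low c.e.\ set $D$ that is dispersive by the lemma. Then run the Spector--Sacks minimal degree construction \emph{below} $D$, i.e.\ carry out the forcing with the oracle $D$ available, producing a set $A \leqT D$ whose degree is minimal. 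Because $A \leqT D$ and $D$ is dispersive, downward closure of the dispersive degrees gives that $A$ is dispersive, and $A$ is minimal by construction.

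Concretely, the key steps in order are as follows. First I would recall that by Sacks' theorem there is a minimal degree below $\mathbf 0'$; more precisely, one can carry out the perfect-tree forcing construction using a low c.e.\ oracle. So the first step is to pick a low c.e.\ set $D$ (which exists by the low basis / Friedberg--Muchnik-style low simple set construction) and observe $D$ is dispersive by the cited lemma. Second, I would perform the standard minimal-degree construction relative to nothing but using $D'$ (or rather $D$ as a $\Delta^0_2$ oracle) to decide, at each stage, the $\Pi^0_1$ questions ``does this perfect tree have a branch forcing $\Phi_e$ total and $\{0,1\}$-valued in a non-trivial way'', thereby building a nested sequence of $D$-computable perfect trees $T_0 \supseteq T_1 \supseteq \cdots$ whose intersection is a single real $A$; the splitting/non-splitting dichotomy at each step guarantees that every $B \leqT A$ is either computable or computes $A$, i.e.\ $\deg(A)$ is minimal. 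Crucially the whole construction is $D$-computable, so $A \leqT D$. Third, apply downward closure: since $\deg(A) \leq \deg(D)$ and $\deg(D)$ is dispersive, $\deg(A)$ is dispersive. Since $\deg(A)$ is also minimal, we are done.

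The main obstacle is verifying that the minimal-degree construction can genuinely be completed \emph{below} a low (or merely $\Delta^0_2$) set rather than below $\mathbf 0'$ itself. The Spector construction as usually presented uses a $\mathbf 0''$ or $\mathbf 0'$ oracle to answer the relevant $\Pi^0_1$ or $\Pi^0_2$ questions about perfect trees; making do with a low c.e.\ oracle requires the sharper Sacks-style argument, where lowness of $D$ is exactly what lets the $D'$-oracle questions be approximated and the construction be recursive in $D$. This is classical (it is the proof that there is a minimal degree below every nonzero c.e.\ degree, or at least below $\mathbf 0'$ that is itself low over a low set), so I would cite it rather than reprove it; but one should double-check that the particular low c.e.\ set furnished by the lemma is compatible, i.e.\ that we are free to choose $D$ to be, say, low and c.e.\ and nonzero, and that the minimal degree produced is nonzero (which it is, being minimal). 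A secondary, purely bookkeeping point is that ``minimal for Turing degrees'' should be stated to mean $\deg(A) \neq \mathbf 0$ and nothing lies strictly between $\mathbf 0$ and $\deg(A)$ --- the construction delivers exactly this.
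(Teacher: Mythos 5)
Your proposal is correct and follows essentially the same route as the paper: fix a (noncomputable) low c.e.\ set, which is dispersive by the cited lemma, invoke the classical theorem that such a set bounds a minimal Turing degree, and conclude by downward closure of the dispersive degrees. The paper simply cites Odifreddi for the minimal-degree-below-a-c.e.-degree step rather than sketching the Sacks/Yates tree construction, so your extra detail there is harmless but not needed.
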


\begin{proof}
    Let $C$ be a low c.e. set.
    By the lemma,
    the set $C$ is dispersive.
    Now,
    by \cite[Theorem XI.4.9]{Odifreddi1999},
    the set $C$ computes a set $D$ which has minimal Turing degree.
    And because dispersive degrees are closed downwards,
    the set $D$ is also dispersive.
\end{proof}

This argument does not work for attractive degrees,
which are closed upwards.

It is known that there are sets $A$ with minimal degree and with effective packing dimension $1$
(i.e.\ $\limsup_n K(A \trunc n)/n = 1$)
\cite[Theorem~1.1]{DG2008}.
However,
we would need a lower bound on $K(A \trunc n)$ which is always true,
rather than just infinitely often.
The problem of finding a set with minimal degree and Hausdorff dimension $1$
(i.e.\ $\liminf_n K(A \trunc n)/n = 1$)
is still open,
but we can formulate the following easier question.

\begin{question}
    Are there any sets $A$ with minimal Turing degree
    which also satisfy $K(A \trunc n) > 2K(n)$ for all $n$?
\end{question}

\bibliographystyle{plainurl}
\bibliography{bib}

\end{document}